\documentclass[10pt,a4paper]{article}
\usepackage{geometry}
\usepackage[english]{babel}
\usepackage{amsmath}
\usepackage{graphicx}
\usepackage{ marvosym }
\usepackage[utf8]{inputenc}
\usepackage{mathtools}
\usepackage{geometry}
\usepackage{amsfonts}
\usepackage{amssymb}
\usepackage{amsthm}
\usepackage{t1enc}
\usepackage[titles]{tocloft}
\usepackage{makeidx}
\usepackage{wasysym}
\usepackage{stmaryrd}
\usepackage{calc}  
\usepackage{enumitem} 
\usepackage[refpage]{nomencl}

\usepackage[colorlinks=true,urlcolor=blue, linkcolor=blue,pageanchor=false, backref]{hyperref}
\usepackage{algpseudocode}
\usepackage{tikz}
\usetikzlibrary{arrows}
\usepackage{indentfirst}
\usepackage{xcolor}
\usepackage{ dsfont }
\usepackage{float}
\usepackage[abbrev,msc-links,backrefs]{amsrefs} 
\usepackage{doi}

\renewcommand{\PrintDOI}[1]{\doi{#1}}

\theoremstyle{plain}
\newtheorem{thm}{Theorem}[section]
\newtheorem{ques}[thm]{Question}

\newtheorem*{prop*}{Proposition}
\newtheorem*{seged*}{Sublemma}

\newtheorem{lem}[thm]{Lemma}

\newtheorem*{cond*}{Condition}

\newtheorem*{lem*}{Lemma}
\theoremstyle{definition}

\newtheorem*{defn*}{Definition}

\newtheorem{fel*}[thm]{Exercise}

\newtheorem*{megf*}{Observation}
\theoremstyle{remark}

\newtheorem{obs}[thm]{Observation}
\newtheorem*{rem*}{Remark}

\title{Uncountable dichromatic number without short directed cycles}
\author{Attila Joó \thanks{University of Hamburg and
Alfréd Rényi Institute of Mathematics.
Funding was provided by the Alexander von Humboldt Foundation and partially by OTKA 129211
 Email: {\tt attila.joo@uni-hamburg.de
 } }}
\date{2019}
\begin{document}
\maketitle
\begin{abstract}
A. Hajnal and P. Erdős proved that a graph with uncountable chromatic number cannot avoid short cycles,  it 
must contain for example $ C_4 $ (among other obligatory subgraphs).  It was shown recently by D. T. 
Soukup  that, in 
contrast of 
 the undirected case, it is consistent that for any $ n<\omega $ there exists an uncountably 
dichromatic digraph 
without  directed cycles shorter than $ n $. He asked if it is  provable already in 
ZFC. We 
answer his question 
positively by constructing for every infinite cardinal 
$ \kappa $ and $ n<\omega $ a digraph of size $ 
2^{\kappa} $ with
dichromatic number  at least $ \kappa^{+} $ without directed cycles of length less than $ n $.
\end{abstract}
\section{Introduction}
\subsection{Background}
Graphs with uncountable chromatic number and their obligatory subgraphs is a well investigated branch of infinite graph
theory. It is known  that every finite bipartite graph and hence every even cycle
must be a subgraph of any uncountably chromatic graph.  For a survey of the related results we refer to 
\cite{chrom survey}. The dichromatic number $ 
\chi(D) $ of a digraph $ D $ (introduced by V. Neumann-Lara  
in \cite{dich}) 
is the 
smallest cardinal $ \kappa $ for which $ V(D) $ can be coloured with $ \kappa $ many colours avoiding monochromatic 
directed cycles.  Some of the results in connection with the dichromatic number are analogues with the corresponding 
theorems about the chromatic 
number. For example it was shown in \cite{prob} 
by  probabilistic 
methods   that for every $ k,n<\omega $ there 
is a  (finite) digraph $ D $ with $ \chi(D)\geq k  $ 
which does not contain a 
directed cycle of length less than $ n $ (an elegant explicit construction for such  digraphs was given later by M. Severino in 
\cite{konstruktiv}). 
Because of the 
undirected analogue, it was natural to expect that 
uncountable 
dichromatic 
number makes some small directed cycles unavoidable. Galvin and Shelah showed in \cite{GalShel} that there is a 
tournament on $ \omega_1 $ without uncountable transitive subtournaments. It exemplifies that one can avoid directed cycles 
of length two (i.e., back an forth edges between a vertex pair).   D. T. Soukup investigated in \cite{konz true}  the relation 
between the chromatic 
number of a graph and the 
dichromatic number of its possible orientations (which research direction was initiated by P. Erdős and V. Neumann-Lara)  and the obligatory 
subdigraphs of uncountably dichromatic digraphs. One of his conclusions  (see Theorem 3.5 of \cite{konz true}) is
the following: 
\begin{thm}[D. T. Soukup]\label{Dani cons}
It is consistent  that for every $ n<\omega $ there is a digraph $ D $ with  dichromatic number $ \aleph_1 
$ which does not contain directed cycles of length less than $ n $.
\end{thm}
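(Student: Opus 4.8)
The plan is to bypass forcing entirely and prove the ZFC strengthening announced in the abstract: for every infinite cardinal $\kappa$ and every $n<\omega$ there is a digraph $D$ with $|V(D)|=2^{\kappa}$, $\chi(D)\geq\kappa^{+}$, and no directed cycle of length $<n$; Theorem~\ref{Dani cons} is then the instance $\kappa=\aleph_{0}$. Fix $m:=\max(n,3)$ and take $V:={}^{\kappa}m$, the set of all functions $\kappa\to m$. For distinct $f,g\in V$ write $\Delta(f,g)$ for the least coordinate at which they differ, and declare $f\to g$ iff $g(\Delta(f,g))\equiv f(\Delta(f,g))+1\pmod{m}$. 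Since $2\le m<\aleph_{0}\le\kappa$ we have $|V|=m^{\kappa}=2^{\kappa}$, as required.

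I would first check the girth. Let $f_{0}\to f_{1}\to\cdots\to f_{k-1}\to f_{0}$ be a directed cycle on $k\ge 2$ distinct vertices and let $\delta$ be the least coordinate at which the $f_{i}$ are not all equal (it exists since $f_{0}\neq f_{1}$). Below $\delta$ all $f_{i}$ agree, so each arc $f_{i}\to f_{i+1}$ (indices mod $k$) satisfies $\Delta(f_{i},f_{i+1})\ge\delta$, and the arc rule forces $f_{i+1}(\delta)$ to be either $f_{i}(\delta)$ (if $\Delta>\delta$) or $f_{i}(\delta)+1\bmod m$ (if $\Delta=\delta$). Going once around returns $f_{0}(\delta)$ to itself, so the number $j$ of ``$+1$'' steps is a multiple of $m$; it is positive because the values $f_{i}(\delta)$ are not all equal, and clearly $j\le k$, hence $k\ge m\ge n$. (Directed $m$-cycles do occur, so the directed girth is exactly $m$.)

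The heart of the matter is $\chi(D)\geq\kappa^{+}$, i.e.\ that no colouring $c:V\to\kappa$ avoids monochromatic directed cycles; suppose one did. View $V$ as the set of height-$\kappa$ branches of the tree ${}^{\le\kappa}m$, write $V_{t}$ for the branches extending a node $t$, and for a colour $\alpha<\kappa$ put $T_{\alpha}:=\{t:\ c^{-1}(\alpha)\cap V_{t}\neq\emptyset\}$, a downward-closed subtree. The key observation is that for every node $t$ of height $<\kappa$ and every $\alpha$, at least one immediate successor $t^{\frown}v$ ($v<m$) lies outside $T_{\alpha}$: otherwise $\alpha$ appears on branches $f_{v}\in V_{t^{\frown}v}$ for all $v<m$, and since $f_{v}$ and $f_{v+1\bmod m}$ first differ at $|t|$ with values $v$ and $v+1\bmod m$, the arcs $f_{0}\to f_{1}\to\cdots\to f_{m-1}\to f_{0}$ all exist, a monochromatic directed $m$-cycle. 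Thus each $T_{\alpha}$ is at most $(m-1)$-branching. Now I diagonalise: enumerating the colours as $\langle T_{\xi}:\xi<\kappa\rangle$, build an increasing chain $\langle s_{\xi}:\xi<\kappa\rangle$ of nodes with $|s_{\xi}|=\xi$, where at a successor stage we extend $s_{\xi}$ by one coordinate into a successor that avoids $T_{\xi}$ (possible by the observation when $s_{\xi}\in T_{\xi}$, and trivial otherwise), and take unions at limits. Then $f:=\bigcup_{\xi<\kappa}s_{\xi}\in V$ has, for each $\xi$, an initial segment outside the downward-closed set $T_{\xi}$, hence $f\notin T_{\xi}$; therefore $c(f)\neq\xi$ for every $\xi<\kappa$, contradicting $c(f)\in\kappa$.

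The real difficulty is locating the right construction: one needs arcs whose orientation depends on local data (here the value at the first coordinate of disagreement) in such a way that a single colour surviving into all $m$ ``directions'' at some node already forces a short directed cycle, while no colouring-independent short cycle exists at all. Once the ``first difference modulo $m$'' digraph is in hand, the girth computation and the tree diagonalisation are both short; the only subtlety to keep in mind is that the subtrees $T_{\alpha}$, although $(m-1)$-branching, may each carry $2^{\kappa}$ branches, so a cardinality count does not suffice and the explicit escape argument is genuinely needed.
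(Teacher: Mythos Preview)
Your proposal is correct and follows the paper's approach almost verbatim: the paper does not prove Theorem~\ref{Dani cons} directly (it is cited from Soukup) but instead establishes the ZFC strengthening (Theorem~\ref{main thm}) via exactly the ``first-difference modulo $n$'' digraph on ${}^{\kappa}n$, with the same girth argument (track the value at the least coordinate of disagreement around the cycle) and the same transfinite diagonalisation showing no $\kappa$-colouring avoids monochromatic cycles. Your cosmetic replacement of $n$ by $m=\max(n,3)$ and your phrasing of the diagonalisation via the downward-closed subtrees $T_\alpha$ are minor reformulations of the paper's Lemmas; the substance is identical.
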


He asked (Question 6.7 in \cite{konz true}) if the statement in Theorem \ref{Dani cons} is true already in ZFC. We answer this question positively 
even with 
arbitrary large $ \kappa $ instead of $ \aleph_1 $.

\subsection{Notation}
We use standard set theoretic notation. Ordinals (in particular natural numbers) are identified with the set of the smaller ordinals. The set of functions 
with domain $ A $ and range $ \subseteq B $ is denoted by $ 
{}^{A}B $. For the concatenation of sequences $ s$ and $ z $ we write $ s^\frown z $. Sequences of length $ 1 $ are not 
distinguished in notation from their only element. For a sequence $ s $ with length at least $ \alpha $, $ s\upharpoonright 
\alpha $ is its restriction to $ \alpha $. For an ordered pair $ \{ \{ u \}, \{u,v  \} \} $, we write simply $ uv $.
A 
digraph $ D $ is a set of ordered pairs without loops (i.e., without elements of the form $ vv $). The vertex set of a digraph $ D $ is $ V(D):= 
\bigcup\bigcup D $. 
For $ U\subseteq V(D) $, the subdigraph spanned by $ U $ is denoted by $ D[U] $. A directed cycle of length $ n\ (2\leq 
n<\omega)$ is a digraph of the 
form $ \{ v_k 
v_{k+1}: k<n \} $ 
where the addition meant to be mod $ n $ and   $v_0,\dots , v_{n-1} $ are pairwise distinct.
\section{Main result}
\begin{thm}\label{main thm}
For every infinite cardinal $ \kappa $ and $ n<\omega $, there is a digraph of size $ 2^{\kappa} $ with
dichromatic number at least $ \kappa^{+} $ which does not contain directed cycles of length less than $ n $.
\end{thm}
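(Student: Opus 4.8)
The plan is to recast the dichromatic bound in partition language and then build the digraph by a recursion on $n$. A colouring of $V(D)$ by $\kappa$ colours with no monochromatic directed cycle is precisely a partition of $V(D)$ into $\kappa$ parts each inducing an acyclic subdigraph, so I must produce a $D$ of size $2^{\kappa}$, with no directed cycle of length $<n$, that admits no such partition. The only cardinal arithmetic I expect to need is König's theorem in the form $\operatorname{cf}(2^{\kappa})>\kappa$: in any partition of a set of size $2^{\kappa}$ into at most $\kappa$ pieces, some piece has size $2^{\kappa}$. Hence it would suffice to arrange that \emph{every} induced subdigraph of $D$ on $2^{\kappa}$ vertices contains a directed cycle (of length $\ge n$, automatically); then acyclic sets have size $<2^{\kappa}$, a union of $\le\kappa$ of them has size $<2^{\kappa}$, and so $\chi(D)\ge\kappa^{+}$.

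For small $n$ this stronger target is already within reach: a Galvin--Shelah / Sierpiński-type tournament on $2^{\kappa}$ vertices with no transitive subtournament of size $\kappa^{+}$ --- built from a well-ordering of the vertex set of type $2^{\kappa}$ together with a second, Sierpiński-style linear order --- has the property that every $2^{\kappa}$-subset induces a non-transitive, hence cyclic, tournament, and a tournament has no directed $2$-cycle; this disposes of $n\le 3$, and $n\le 2$ needs only the complete digraph. For $n\ge 4$, however, the obvious moves all fail and must be steered around: orienting a triangle-free graph of large chromatic number acyclically gives dichromatic number $1$; subdividing every edge of a digraph drops its dichromatic number to at most $2$; and a ``levelled'' digraph, whose edges run only between consecutive classes of a cyclic $n$-partition, has dichromatic number at most $n$, because those classes are independent. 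So the target digraph must be genuinely non-levelled, and for $n\ge 4$ I do not expect it to retain the clean property above --- it will carry large acyclic induced subdigraphs --- so a more refined argument is required to rule out a $\kappa$-sized acyclic cover.

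Accordingly I would proceed by recursion on $n$ (base case $n\le 3$ as above), maintaining not merely ``$\chi\ge\kappa^{+}$ and directed girth $\ge n$'' but a hereditary, more structured invariant: every $2^{\kappa}$-sized induced subdigraph again has dichromatic number $\ge\kappa^{+}$, together with control over how directed cycles sit relative to an auxiliary well-ordering of the vertex set. To pass from $n-1$ to $n$, I keep the vertex set (of size $2^{\kappa}$, perhaps realised as a suitable set of $\kappa$-sequences) and delete a carefully chosen family of edges, destroying every directed $(n-1)$-cycle while retaining enough directed cycles of length $\ge n$; the deletion is driven by the auxiliary well-ordering of type $2^{\kappa}$ together with a ``$\Delta$-type'' function on pairs of vertices --- in the spirit of the Sierpiński colourings --- so that along any directed walk this parameter is forced to advance and the walk can close up only after at least $n$ steps. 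The girth bound is then a direct ``advance'' computation on walks. For the dichromatic bound, given a colouring $c\colon V(D)\to\kappa$ with all classes acyclic, I pass (using $\operatorname{cf}(2^{\kappa})>\kappa$) to a colour class $C$ of size $2^{\kappa}$ and, working inside $C$ with the auxiliary structure and the acyclicity of $D[C]$, reach a contradiction with the level-$(n-1)$ hypothesis or with the no-large-homogeneous-set property of the Sierpiński ingredient. A one-shot construction, defining the edge relation directly on the chosen set of $\kappa$-sequences rather than recursing, may well be cleaner, but the two requirements it must meet are the same.

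The main obstacle --- where I expect essentially all the difficulty to be --- is exactly the choice of edge set: killing all short directed cycles calls for sparseness and a rigid advancing structure, while forcing the dichromatic number up calls for the opposite kind of density, and as the failures above illustrate, the usual ways to buy girth collapse the dichromatic number whereas the usual ways to keep it large (tournament-like density) force girth $3$. Isolating the right strengthened invariant and checking that it survives the edge-deletion is the crux; by comparison the girth verification and the cardinal bookkeeping --- which uses only $\operatorname{cf}(2^{\kappa})>\kappa$ and that a $2^{\kappa}$-set meets each of $\le\kappa$ parts cofinally --- are routine.
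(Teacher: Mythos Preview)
Your proposal is not a proof but a research outline: you correctly identify the two requirements (directed girth $\ge n$ and no acyclic $\kappa$-cover) and correctly diagnose why the naive moves fail, but you explicitly leave the central step --- the actual choice of edge set, respectively the ``strengthened invariant'' and the edge-deletion rule in your recursion on $n$ --- unspecified, calling it ``the crux'' and ``the main obstacle''. Without that step there is nothing to verify; everything else you write is preparatory.

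The paper bypasses the difficulties you anticipate with a single explicit construction. Take $V={}^{\kappa}n$ and put $uv\in D$ iff, at the \emph{first} coordinate $\xi$ where $u$ and $v$ differ, $v(\xi)=u(\xi)+1\pmod n$. The girth bound is immediate: looking at the least coordinate on which the vertices of a directed cycle are not all equal, each edge of the cycle advances that coordinate by $1\pmod n$, so the cycle visits all $n$ residue classes and has length $\ge n$. The dichromatic bound does \emph{not} go through ``every $2^{\kappa}$-subset contains a cycle'' --- and your suspicion here is right: for $n\ge3$ the set of $\{0,1\}$-valued sequences induces the lexicographic transitive tournament, an acyclic set of size $2^{\kappa}$. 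Instead, given $c\colon V\to\kappa$, one builds by transfinite recursion a continuous increasing sequence $s_\alpha\in{}^{\alpha}n$ such that no colour $<\alpha$ occurs on the cylinder $V_{s_\alpha}=\{v:v\upharpoonright\alpha=s_\alpha\}$; the successor step works because if colour $\beta$ met every sub-cylinder $V_{{s_\beta}^{\frown}k}$ ($k<n$) one could pick a monochromatic directed $n$-cycle. At stage $\kappa$ the single vertex $s_\kappa$ has no legal colour. No K\"onig-type cofinality argument is used.
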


\begin{proof}
Let  $ V:={}^{\kappa}n$. For $ u\neq v \in V $, let $ uv\in D $ if for the smallest $ \xi $ 
with $ u(\xi)\neq 
v(\xi) $ we have 
$ 
v(\xi)=u(\xi)+1 $ mod $ n $.
For a sequence $ s $ with length  $\alpha< \kappa $ and with range $ \subseteq n $, let $ V_s:=\{ v\in 
V:v\upharpoonright 
\alpha=s  \} $. From the definition of the edges of $ D $  the following is clear.

\begin{obs}\label{obs}
For every $ \alpha<\kappa $ and  $ s\in {}^{\alpha}n $, the function $ \varphi_s\in {}^{V}V_s $ with 
$ \varphi_s(v)=s^\frown v $ is an isomorphism between $ D $ and $ D[V_s] $.
\end{obs}
Theorem \ref{main thm} follows from the following two lemmas.
\begin{lem}
$ D $ does not contain directed cycles of length less than $ n $.
\end{lem}
\begin{proof}
Let $ C \subseteq D $ be a directed cycle. Without loss of generality we can assume that the first 
coordinates of the elements of $ V(C) $ are not all the same. Indeed, otherwise let us denote the longest common initial 
segment of the elements of $ V(C) $ by $ s $ and we consider the directed cycle of same length as $ C $ which is 
the inverse 
image 
of $ C $ with 
respect to $ \varphi_s $ (see Observation \ref{obs}) instead of $ C $.
 
By symmetry, 
we may 
assume that   $V(C)\cap V_{0}\neq \varnothing $. We know that $V(C)\not\subseteq V_0 $
 because  not all $ v\in V(C)$ start with  $ 0 $ by assumption. It follows  
that $ C $ uses an edge that leaves $ V_0 $. It is easy to prove by induction that $ C $ visits $ V_k $ for every 
$ k< n $ (see Figure \ref{D fig})   thus $ 
\left|V(C)\right| \geq n $.
\end{proof}

\begin{lem}
For every  colouring $ c\in {}^{V}\kappa $, there is a monochromatic directed cycle in $ D $.
\end{lem}
\begin{proof}
Suppose for a contradiction that  colouring  $ c\in {}^{V}\kappa $ avoids monochromatic directed cycles.  We define a 
continuous increasing sequence $ \left\langle s_\alpha: \alpha \leq \kappa\right\rangle  $ where $ s_\alpha\in {}^{\alpha}n $
 and $ c $ does not use the colours  below $ \alpha $ in $ 
 V_{s_\alpha} $. For $ \alpha=0 $, our only choice $ s_0:=\varnothing $ satisfies the conditions. For limit ordinal $ \alpha $, 
 we preserve the condition automatically since 
 $ V_{s_\alpha}=\bigcap_{\beta<\alpha}V_{s_{\beta}} $. Suppose that $ \alpha=\beta+1 $.  Colour $ \beta $ 
cannot appear in every set $ V_{{s_\beta}^\frown k } \ (k< n)$ otherwise we can pick a $ v_k\in V_{{s_\beta}^\frown 
k } $ with $c(v_k)=\beta $ 
for 
each $ k< n $ and  then $v_{0}, \dots,v_{n-1} $ 
 form a 
directed cycle of colour $ \beta $. Pick a $ k_\beta<n $ such that colour $ \beta $  does not appear  in $ 
V_{{s_\beta}^\frown k_\beta}$ and let $ s_{\beta+1}:= {s_\beta}^\frown k_\beta$. Since 
$ V_{{s_\beta}^\frown k_\beta}\subseteq V_{{s_\beta}} $, by induction and by the choice of $ k_\beta $ it follows that 
colours 
below $ \beta+1  $ do not appear in $ V_{{s_\beta}^\frown k_\beta}=V_{s_{\beta+1}} $.
The 
recursion is done. Consider $ V_{s_\kappa}=\{ s_\kappa \} $, we have $ 
c(s_\kappa)\neq \xi $ for every $ \xi<\kappa $ 
which contradicts $ c\in {}^{V}\kappa $.
\end{proof}

\begin{figure}[H]
\centering
\begin{tikzpicture}[scale=0.63]

\draw (3,3) node (v20) {} circle (2.7cm);
\draw (-3,3) node (v17) {} circle (2.7cm);
\draw (3,-3) node (v19) {} circle (2.7cm);
\draw (-3,-3) node (v18) {} circle (2.7cm);
\draw (4,4) node (v16) {} circle (0.7cm);
\draw (2,4) node (v13) {} circle (0.7cm);
\draw (4,2) node (v15) {} circle (0.7cm);
\draw (2,2) node (v14) {} circle (0.7cm);
\draw (-4,4) node (v1) {} circle (0.7cm);
\draw (-2,4) node (v4) {} circle (0.7cm);
\draw (-4,2) node (v2) {} circle (0.7cm);
\draw (-2,2) node (v3) {} circle (0.7cm);
\draw (4,-4) node (v11) {} circle (0.7cm);
\draw (2,-4) node (v10) {} circle (0.7cm);
\draw (4,-2) node (v12) {} circle (0.7cm);
\draw (2,-2) node (v9) {} circle (0.7cm);
\draw (-4,-4) node (v6) {} circle (0.7cm);
\draw (-2,-4) node (v7) {} circle (0.7cm);
\draw (-4,-2) node (v5) {} circle (0.7cm);
\draw (-2,-2) node (v8) {} circle (0.7cm);

\node at (-3,5.9) {$V_0$};
\node at (3,5.9) {$V_3$};
\node at (-3,-0.1) {$V_1$};
\node at (3,-0.1) {$V_2$};

\node at (-3.9,4.9) {$V_{0^\frown0}$};
\node at (-4,2.9) {$V_{0^\frown1}$};
\node at (-2,2.9) {$V_{0^\frown2}$};
\node at (-2,4.9) {$V_{0^\frown3}$};
\node at (-3.9,-1.1) {$V_{1^\frown0}$};
\node at (-4,-3.1) {$V_{1^\frown1}$};
\node at (-2,-3.1) {$V_{1^\frown2}$};
\node at (-2,-1.1) {$V_{1^\frown3}$};
\node at (2.1,-1.1) {$V_{2^\frown0}$};
\node at (2,-3.1) {$V_{2^\frown1}$};
\node at (4,-3.1) {$V_{2^\frown2}$};
\node at (4,-1.1) {$V_{2^\frown3}$};
\node at (2.1,4.9) {$V_{3^\frown0}$};
\node at (2,2.9) {$V_{3^\frown1}$};
\node at (4,2.9) {$V_{3^\frown2}$};
\node at (4,4.9) {$V_{3^\frown3}$};

\draw  (v1) edge[thick, ->] (v2);
\draw  (v2) edge[thick, ->] (v3);
\draw  (v3) edge[thick, ->]  (v4);
\draw  (v4) edge[thick, ->]  (v1);
\draw  (v5) edge[thick, ->]  (v6);
\draw  (v6) edge[thick, ->]  (v7);
\draw  (v7) edge[thick, ->]  (v8);
\draw  (v8) edge[thick, ->]  (v5);
\draw  (v9) edge[thick, ->]  (v10);
\draw  (v10) edge[thick, ->]  (v11);
\draw  (v11) edge[thick, ->]  (v12);
\draw  (v12) edge[thick, ->]  (v9);
\draw  (v13) edge[thick, ->]  (v14);
\draw  (v14) edge[thick, ->]  (v15);
\draw  (v15) edge[thick, ->]  (v16);
\draw  (v16) edge[thick, ->]  (v13);
\draw  (v17) edge[thick, ->]  (v18);
\draw  (v18) edge[thick, ->]  (v19);
\draw  (v19) edge[thick,  ->]  (v20);
\draw  (v20) edge[thick,  ->]  (v17);
\end{tikzpicture}
\caption{The construction of $ D$ for $ n=4 $. The single edges between vertex sets symbolize that all the possible edges  
exist between the two 
corresponding vertex sets in that direction.}\label{D fig}
\end{figure}
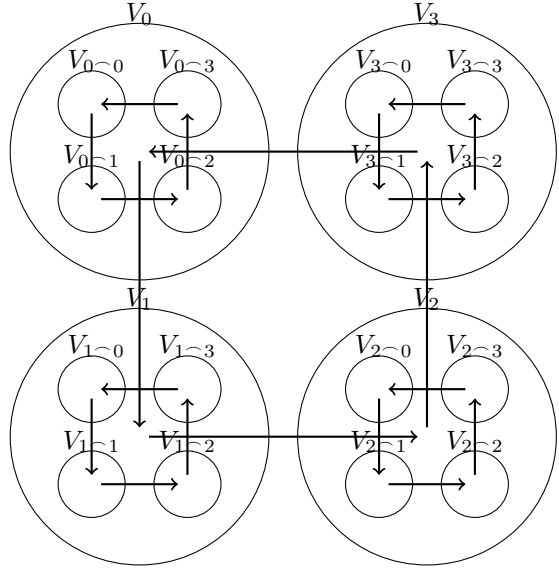 
\end{proof}

\section{Open questions}

\begin{ques}\label{first que}
Does there exist for every infinite cardinal $ \kappa $ and $ n<\omega $  a digraph $ D $ of size and dichromatic number $ 
\kappa $ which does not contain directed cycles of length less than $ n $.
\end{ques}

\begin{ques}\label{second que}
Is it true that for every uncountable not weakly compact cardinal $ \kappa $ and $ n<\omega $ there is a digraph $ D $ on $ \kappa $ such that for 
every $ X\subseteq 
\kappa $ 
of size $ \kappa $ $ D[X] $ contains a directed cycle?
\end{ques}

Question \ref{first que} is consistently true (combine Theorem \ref{main thm} with the generalized continuum hypothesis).  
For a fixed $ \kappa $, Question \ref{second que} can be forced to be true by a ccc poset, it is done 
(for $ \kappa=\aleph_1 $) in Theorem 3.5 of \cite{konz true}).
\begin{bibdiv}
\begin{biblist}
\bib{konz true}{article}{
   author={Soukup, D\'{a}niel T.},
   title={Orientations of graphs with uncountable chromatic number},
   journal={J. Graph Theory},
   volume={88},
   date={2018},
   number={4},
   pages={606--630},
   issn={0364-9024},
   review={\MR{3818601}},
   doi={10.1002/jgt.22233},
}

\bib{chrom survey}{article}{
   author={Komj\'{a}th, P\'{e}ter},
   title={The chromatic number of infinite graphs---a survey},
   journal={Discrete Math.},
   volume={311},
   date={2011},
   number={15},
   pages={1448--1450},
   issn={0012-365X},
   review={\MR{2800970}},
   doi={10.1016/j.disc.2010.11.004},
}

\bib{dich}{article}{
   author={Neumann Lara, V.},
   title={The dichromatic number of a digraph},
   journal={J. Combin. Theory Ser. B},
   volume={33},
   date={1982},
   number={3},
   pages={265--270},
   issn={0095-8956},
   review={\MR{693366}},
   doi={10.1016/0095-8956(82)90046-6},
}

\bib{prob}{article}{
   author={Bokal, Drago},
   author={Fijav\v{z}, Ga\v{s}per},
   author={Juvan, Martin},
   author={Kayll, P. Mark},
   author={Mohar, Bojan},
   title={The circular chromatic number of a digraph},
   journal={J. Graph Theory},
   volume={46},
   date={2004},
   number={3},
   pages={227--240},
   issn={0364-9024},
   review={\MR{2063373}},
   doi={10.1002/jgt.20003},
}
\bib{konstruktiv}{article}{
   author={Severino, Michael},
   title={A short construction of highly chromatic digraphs without short
   cycles},
   journal={Contrib. Discrete Math.},
   volume={9},
   date={2014},
   number={2},
   pages={91--94},
   issn={1715-0868},
   review={\MR{3320450}},
}

\bib{GalShel}{article}{
   author={Galvin, Fred},
   author={Shelah, Saharon},
   title={Some counterexamples in the partition calculus},
   journal={J. Combinatorial Theory Ser. A},
   volume={15},
   date={1973},
   pages={167--174},
   review={\MR{0329900}},
   doi={10.1016/s0097-3165(73)80004-4},
}	
\end{biblist}
\end{bibdiv}
\end{document}